\newtheorem{theorem}{Theorem}[section]
\newtheorem{corollary}{Corollary}
\newtheorem{lemma}[theorem]{Lemma}
\theoremstyle{definition}
\newtheorem{definition}[theorem]{Definition}
\newtheorem{remark}{Remark}
\newtheorem{example}{Example}
\title[Natural Boundary Conditions]{Natural Boundary Conditions\\
in the Calculus of Variations}
\author[A.B. Malinowska and D.F.M. Torres]{}
\subjclass{Primary: 49K05, 39A12; Secondary: 49-99, 93-99}
\keywords{Calculus of variations, optimal control, time scales,
discrete-time, transversality conditions}
\email{abmalinowska@ua.pt}
\email{delfim@ua.pt}
\thanks{The first author is on leave of absence
from Faculty of Computer Science,
Bia{\l}ystok Technical University,
15-351 Bia\l ystok, Poland.
E-mail: abmalina@pb.bialystok.pl}
\begin{document}

\maketitle

\centerline{\scshape Agnieszka B. Malinowska}
\medskip
{\footnotesize
 \centerline{Department of Mathematics, University of Aveiro}
   \centerline{3810-193 Aveiro, Portugal}
}

\medskip

\centerline{\scshape Delfim F. M. Torres }
\medskip
{\footnotesize
 \centerline{Department of Mathematics, University of Aveiro}
   \centerline{3810-193 Aveiro, Portugal}
}

\bigskip

\begin{abstract}
We prove necessary optimality conditions for
problems of the calculus of variations on time scales
with a Lagrangian depending on the free end-point.
\end{abstract}


\section{Introduction}

The calculus on time scales was introduced by
Bernd Aulbach and Stefan Hilger in 1988
\cite{Au:Hilger}. The new theory unify and extends
the traditional areas of continuous and discrete analysis
and the various dialects of $q$-calculus \cite{dt:qc}
into a single theory \cite{book:ts1,1st:book:ts},
and is finding numerous applications in such areas
as engineering, biology, economics, finance,
and physics \cite{ts:survey}. The present work is dedicated
to the study of problems of calculus of variations on a generic
time scale $\mathbb{T}$. As particular cases,
one gets the classical calculus of variations \cite{GelfandFomin}
by choosing $\mathbb{T} = \mathbb{R}$;
the discrete-time calculus of variations \cite{KP}
by choosing $\mathbb{T} = \mathbb{Z}$;
and the $q$-calculus of variations
\cite{Bang:q-calc} by choosing $\mathbb{T} =
q^{\mathbb{N}_0}:=\{q^k | k \in \mathbb{N}_0\}$, $q>1$.

The calculus of variations on time scales was born with the works \cite{cv:02} and \cite{b7} and seems
to have interesting applications in Economics
\cite{Almeida:T,Atici:Uysal:06,Atici:Uysal:08,RuiDel07}.
Currently, several researchers are getting interested in the new theory and contributing to its development (see, \textrm{e.g.},
\cite{ZbigDel,bhn:Gus,Rui:Del:HO,HZ1,Mal:Tor:09}).
Here we develop further the theory by proving
necessary optimality conditions for more general
problems of the calculus of variations with a Lagrangian
that may also depend on the unspecified end-point $x(T)$.

In Section~\ref{sec:prm} we review the necessary concepts and tools on time scales; our results are given in Section~\ref{sec:mr}. We begin
Section~\ref{sec:mr} by formulating
the problem \eqref{vp}--\eqref{bc} under study:
to minimize a delta-integral functional subject to a given fixed initial-point $x(a) = \alpha$ and having no constraint on $x(T)$.
The novelty is the dependence of the integrand $f$ on the free end-point $x(T)$. Necessary optimality conditions for such problems, on a general time scale, are given using both Lagrangian (Theorem~\ref{thm:mr}) and Hamiltonian formalisms (Theorem~\ref{thm:mr:ham}). Under appropriate
convexity and linearity assumptions,
the conditions turn out to be
sufficient for a global minimum
(\textrm{cf.} Theorem~\ref{sc}).
A number of important corollaries are obtained,
and several examples illustrating the new results
discussed in detail. Corollary~\ref{cor:anws:AZ:CV} (see also Corollary~\ref{Cor:T=R:Hamilt})
give answer to a question posed
to the second author by A.~Zinober in May 2008 during
a visit to the University of Aveiro,
and again presented as an open question during
the conference ``Calculus of Variations
and Applications---from Engineering to Economy'',
held from 8th to 10th September 2008 in the
New University of Lisbon, Monte de Caparica, Portugal:
``What are the necessary optimality conditions for the
problem of the calculus of variations with
a free end-point $x(T)$ but whose Lagrangian
depends explicitly on $x(T)$?'' The new transversality
condition \eqref{eq:tc} (or the equivalent
natural boundary condition \eqref{bc:R})
seems to have important implications in Economics.
This question is under study by
Alan Zinober, Kim Kaivanto, and Pedro Cruz
and will appear elsewhere.


\section{Preliminaries}
\label{sec:prm}

In this section we introduce basic definitions and results that
will be needed for the rest of the paper. For a more general
presentation of the theory of time scales, we refer the reader
to \cite{BP}.

A nonempty closed subset of $\mathbb{R}$ is called a \emph{time
scale} and it is denoted by $\mathbb{T}$. Thus, $\mathbb{R}$,
$\mathbb{Z}$, and $\mathbb{N}$, are trivial examples of times
scales. Other examples of times scales are: $[-2,4] \bigcup
\mathbb{N}$, $h\mathbb{Z}:=\{h z | z \in \mathbb{Z}\}$ for some
$h>0$, $q^{\mathbb{N}_0}:=\{q^k | k \in \mathbb{N}_0\}$ for some
$q>1$, and the Cantor set. We assume that a time scale $\mathbb{T}$
has the topology that it inherits from the real numbers with the
standard topology.

The \emph{forward jump operator}
$\sigma:\mathbb{T}\rightarrow\mathbb{T}$ is defined by
$$\sigma(t)=\inf{\{s\in\mathbb{T}:s>t}\},
\mbox{ for all $t\in\mathbb{T}$},$$ while the \emph{backward jump
operator} $\rho:\mathbb{T}\rightarrow\mathbb{T}$ is defined by
$$\rho(t)=\sup{\{s\in\mathbb{T}:s<t}\},\mbox{ for all
$t\in\mathbb{T}$},$$ with $\inf\emptyset=\sup\mathbb{T}$
(\textrm{i.e.}, $\sigma(M)=M$ if $\mathbb{T}$ has a maximum $M$) and $\sup\emptyset=\inf\mathbb{T}$ (\textrm{i.e.}, $\rho(m)=m$ if
$\mathbb{T}$ has a minimum $m$).

A point $t\in\mathbb{T}$ is called \emph{right-dense},
\emph{right-scattered}, \emph{left-dense} and \emph{left-scattered}
if $\sigma(t)=t$, $\sigma(t)>t$, $\rho(t)=t$ and $\rho(t)<t$,
respectively.

The \emph{graininess function} $\mu:\mathbb{T}\rightarrow[0,\infty)$
is defined by
$$\mu(t)=\sigma(t)-t,\mbox{ for all $t\in\mathbb{T}$}.$$

\begin{example}
If $\mathbb{T} = \mathbb{R}$, then $\sigma(t) = \rho(t) = t$
and $\mu(t)= 0$. If $\mathbb{T} = \mathbb{Z}$, then
$\sigma(t) = t + 1$, $\rho(t) = t - 1$, and
$\mu(t)= 1$. On the other hand, if
$\mathbb{T} = q^{\mathbb{N}_0}$, where $q>1$ is a fixed real number, then we have $\sigma(t) = q t$, $\rho(t) = q^{-1} t$, and
$\mu(t)= (q-1) t$.
 \end{example}

\begin{definition} \cite{BP}
A time scale $\mathbb{T}$ is called \emph{regular} if the
following two conditions are satisfied simultaneously:
\begin{itemize}
\item[(i)] $\sigma(\rho(t))=t$, for all $t\in \mathbb{T}$;
\item[(ii)] $\rho(\sigma((t))=t$, for all $t\in \mathbb{T}$.
\end{itemize}
\end{definition}

Following \cite{BP}, let us define
$\mathbb{T}^{\kappa}=\mathbb{T}\backslash(\rho(b),b]$.

\begin{definition}
\label{def:de:dif}
We say that a function $f:\mathbb{T}\rightarrow\mathbb{R}$ is
\emph{delta differentiable} at $t\in\mathbb{T}^{\kappa}$ if there
exists a number $f^{\Delta}(t)$ such that for all $\varepsilon>0$
there is a neighborhood $U$ of $t$ (\textrm{i.e.},
$U=(t-\delta,t+\delta)\cap\mathbb{T}$ for some $\delta>0$) such that
$$|f(\sigma(t))-f(s)-f^{\Delta}(t)(\sigma(t)-s)|
\leq\varepsilon|\sigma(t)-s|,\mbox{ for all $s\in U$}.$$ We call
$f^{\Delta}(t)$ the \emph{delta derivative} of $f$ at $t$ and say
that $f$ is \emph{delta differentiable} on $\mathbb{T}^{\kappa}$
provided $f^{\Delta}(t)$ exists for all
$t\in\mathbb{T}^{\kappa}$.
\end{definition}

\begin{remark}
If $t \in \mathbb{T} \setminus \mathbb{T}^\kappa$, then
$f^{\Delta}(t)$ is not uniquely defined, since for such
a point $t$, small neighborhoods $U$ of $t$ consist only
of $t$ and, besides, we have $\sigma(t) = t$. For this reason,
maximal left-scattered points are omitted in Definition~\ref{def:de:dif}.
\end{remark}

Note that in right-dense points
$f^{\Delta} (t)=lim_{s\rightarrow t}=\frac{f(t)-f(s)}{t-s}$,
provided this limit exists, and in right-scattered points
$f^{\Delta} (t)=\frac{f(\sigma(t))-f(t)}{\mu(t)}$,
provided $f$ is continuous at $t$.

\begin{example}
\label{ex:der:QC}
If $\mathbb{T} = \mathbb{R}$, then $f^{\Delta}(t) = f'(t)$, \textrm{i.e.}, the delta derivative coincides with the usual one.
If $\mathbb{T} = \mathbb{Z}$, then $f^{\Delta}(t) = \Delta f(t)
= f(t+1) - f(t)$. If $\mathbb{T} = q^{\mathbb{N}_0}$, $q>1$,
then $f^{\Delta} (t)=\frac{f(q t)-f(t)}{(q-1) t}$,
\textrm{i.e.}, we get the usual derivative of Quantum calculus \cite{QC}.
\end{example}

A function $f:\mathbb{T}\rightarrow\mathbb{R}$ is called
\emph{rd-continuous} if it is continuous at right-dense points and
if its left-sided limit exists at left-dense points. We denote the
set of all rd-continuous functions by C$_{\textrm{rd}}$ and the set
of all delta differentiable functions with rd-continuous derivative by C$_{\textrm{rd}}^1$.
It is known that  rd-continuous functions possess an
\emph{antiderivative}, \textrm{i.e.}, there exists a function $F$
with $F^{\Delta}=f$, and in this case the \emph{delta integral}
is defined by $\int_{c}^{d}f(t)\Delta t=F(d)-F(c)$ for all
$c,d\in\mathbb{T}$.

\begin{example}
Let $a, b \in \mathbb{T}$ with $a < b$.
If $\mathbb{T} = \mathbb{R}$, then
$\int_{a}^{b}f(t)\Delta t = \int_{a}^{b}f(t) dt$,
where the integral on the right-hand side is the
classical Riemann integral. If $\mathbb{T} = \mathbb{Z}$,
then $\int_{a}^{b}f(t)\Delta t = \sum_{k=a}^{b-1} f(k)$.
If $\mathbb{T} = q^{\mathbb{N}_0}$, $q>1$, then
$\int_{a}^{b}f(t)\Delta t = (1 - q) \sum_{t \in [a,b)} t f(t)$.
\end{example}

The delta integral has the following properties:
\begin{itemize}

\item[(i)] if $f\in C_{rd}$ and $t \in \mathbb{T}^{\kappa}$, then
\begin{equation*}
\int_t^{\sigma(t)}f(\tau)\Delta\tau=\mu(t)f(t)\, ;
\end{equation*}

\item[(ii)]if $c,d\in\mathbb{T}$ and $f,g\in C_{rd}$, then

\begin{equation*}
\int_{c}^{d}f(\sigma(t))g^{\Delta}(t)\Delta t
=\left[(fg)(t)\right]_{t=c}^{t=d}-\int_{c}^{d}f^{\Delta}(t)g(t)\Delta
t \, ,
\end{equation*}

\begin{equation*}
\int_{c}^{d}f(t)g^{\Delta}(t)\Delta t
=\left[(fg)(t)\right]_{t=c}^{t=d}-\int_{c}^{d}
f^{\Delta}(t)g(\sigma(t))\Delta t.
\end{equation*}

\end{itemize}

The Dubois-Reymond lemma of the calculus of variations
on time scales will be useful for our purposes.

\begin{lemma}(Lemma of Dubois-Reymond \cite{b7})
\label{lemma:DR} Let $g\in C_{\textrm{rd}}$,
$g:[a,b]^k\rightarrow\mathbb{R}$. Then,
$$\int_{a}^{b}g(t) \cdot \eta^\Delta(t)\Delta t=0  \quad
\mbox{for all $\eta\in C_{\textrm{rd}}^1$ with
$\eta(a)=\eta(b)=0$}$$ if and only if $g(t)=c \mbox{ on $[a,b]^k$
for some $c\in\mathbb{R}$}$.
\end{lemma}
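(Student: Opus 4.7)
The plan is to prove both directions, with the sufficiency direction being immediate and most of the work concentrated in necessity. For sufficiency, if $g(t) \equiv c$ on $[a,b]^k$, then by linearity and the fundamental theorem for the delta integral,
\[
\int_a^b c\cdot \eta^\Delta(t)\,\Delta t = c\bigl[\eta(b) - \eta(a)\bigr] = 0
\]
for every admissible $\eta$.

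For the necessity direction, I would use the standard ``averaging'' trick adapted to time scales. Define the constant
\[
c := \frac{1}{b-a}\int_a^b g(\tau)\,\Delta\tau
\]
and the candidate test function
\[
\eta(t) := \int_a^t \bigl(g(\tau) - c\bigr)\Delta\tau.
\]
Since $g$ is rd-continuous, so is $g - c$, hence $\eta$ is well-defined with $\eta^\Delta(t) = g(t) - c$ rd-continuous, so $\eta \in C^1_{\textrm{rd}}$. The endpoint conditions hold: $\eta(a)=0$ trivially, while the choice of $c$ gives $\eta(b) = \int_a^b g(\tau)\,\Delta\tau - c(b-a) = 0$. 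Thus $\eta$ is admissible in the hypothesis.

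Substituting $\eta$ into the vanishing-integral assumption and using $\int_a^b \eta^\Delta(t)\,\Delta t = \eta(b) - \eta(a) = 0$, I get
\[
0 = \int_a^b g(t)\eta^\Delta(t)\,\Delta t - c\int_a^b \eta^\Delta(t)\,\Delta t = \int_a^b \bigl(g(t) - c\bigr)\eta^\Delta(t)\,\Delta t = \int_a^b \bigl(\eta^\Delta(t)\bigr)^2\Delta t.
\]

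The one step that requires genuine care on a general time scale -- and the main obstacle -- is concluding from $\int_a^b h(t)\,\Delta t = 0$ with $h := (\eta^\Delta)^2 \geq 0$ and $h$ rd-continuous that $h \equiv 0$ on $[a,b]^k$. I would argue pointwise: suppose $h(t_0) > 0$ for some $t_0 \in [a,b)^k$. If $t_0$ is right-scattered, then by property~(i) of the delta integral, $\int_{t_0}^{\sigma(t_0)} h(\tau)\,\Delta\tau = \mu(t_0)h(t_0) > 0$, and by non-negativity of $h$ elsewhere the full integral would be strictly positive, a contradiction. If $t_0$ is right-dense, rd-continuity provides a neighborhood on which $h$ is bounded below by a positive constant, again forcing a positive contribution. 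Hence $\eta^\Delta(t) = g(t) - c = 0$ on $[a,b]^k$, which is the desired conclusion.
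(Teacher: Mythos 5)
The paper does not prove this lemma at all --- it imports it from the cited reference \cite{b7} --- and your argument is precisely the standard proof given there: the averaging constant $c=\frac{1}{b-a}\int_a^b g(\tau)\Delta\tau$, the explicit admissible test function $\eta(t)=\int_a^t\bigl(g(\tau)-c\bigr)\Delta\tau$, the reduction to $\int_a^b\bigl(g(t)-c\bigr)^2\Delta t=0$, and the case split (right-scattered versus right-dense) to show that a nonnegative rd-continuous function with vanishing delta integral is identically zero; all of these steps are correct. The only refinement worth recording is at the endpoint: when $b$ is left-dense it belongs to $[a,b]^\kappa$ but the integral cannot detect the single value $g(b)$, so to conclude $g(b)=c$ one should additionally invoke continuity of $g$ at the right-dense point $b$ together with $g\equiv c$ on $[a,b)$.
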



\section{Main Results}
\label{sec:mr}

Let $\mathbb{T}$ be a bounded time scale. Throughout we let
$A,B\in \mathbb{T}$ with $A<B$. For an interval $[c,d]\cap \mathbb{T}$ we simply write $[c,d]$. We
also abbreviate $f\circ\sigma$ by $f^\sigma$.
Now let $[a,T]$ with $T<B$ be a subinterval of $[A,B]$. The problem of the calculus of variations on time scales under consideration has the form
\begin{equation}\label{vp}
 \text{minimize} \quad   \mathcal{L}[x]=\int_{a}^{T}f(t,x^{\sigma}(t),x^{\Delta}(t),x(T))\Delta
     t ,
\end{equation}
over all $x\in C_{rd}^{1}$ satisfying the boundary condition
\begin{equation}\label{bc}
    x(a)=\alpha\, , \quad \alpha\in \mathbb{R}
    \quad (x(T) \text{ free}),
\end{equation}
where the function $(t,x,v,z)\rightarrow f(t,x,v,z)$ from
$[a,T]\times \mathbb{R} \times \mathbb{R} \times \mathbb{R} $ to  $\mathbb{R}$ has partial
continuous derivatives with respect to $x,v,z$ for all $t\in[a,T]$,
and $f(t,\cdot,\cdot,\cdot)$ and its partial derivatives are
rd-continuous for all $t$.
A function $x\in C_{rd}^{1}$ is said to be admissible if it is
satisfies condition \eqref{bc}.

Let us consider the following norm in $C_{rd}^{1}$:
\begin{equation*}
   \|x\|_{1}=sup_{t\in[a,T]}|x^{\sigma}(t)|+sup_{t\in[a,T]}|x^{\Delta}(t)|.
\end{equation*}

\begin{definition}
An admissible function $\tilde{x}$ is said to be a
\emph{weak local minimum} for \eqref{vp}--\eqref{bc} if there exists $\delta >0$ such
that $\mathcal{L}[\tilde{x}]\leq \mathcal{L}[x]$ for all admissible
$x$ with $\|x-\tilde{x}\|_{1}<\delta$.
\end{definition}


\subsection{Lagrangian approach}
\label{sub:sec:Lag}

Next theorem gives necessary optimality conditions
for problem \eqref{vp}--\eqref{bc}.

\begin{theorem}
\label{thm:mr} If $\tilde{x}(\cdot)$ is a solution of the problem
\eqref{vp}--\eqref{bc}, then
\begin{equation}\label{Euler}
f_{x^{\Delta}}^{\Delta}(t,\tilde{x}^{\sigma}(t),\tilde{x}^{\Delta}(t),\tilde{x}(T))=
f_{x^{\sigma}}(t,\tilde{x}^{\sigma}(t),\tilde{x}^{\Delta}(t),\tilde{x}(T))
\end{equation}
for all $t \in [a,T]^\kappa$. Moreover,
\begin{multline}\label{new:tc}
f_{x^{\Delta}}(\rho(T),\tilde{x}^{\sigma}(\rho
(T)),\tilde{x}^{\Delta}(\rho(T)),\tilde{x}(T))+\int_{\rho(T)}^Tf_{x^{\sigma}}(t,\tilde{x}^{\sigma}(t),\tilde{x}^{\Delta}(t),\tilde{x}(T))\Delta
t\\
+\int_a^T
f_{z}(t,\tilde{x}^{\sigma}(t),\tilde{x}^{\Delta}(t),\tilde{x}(T))\Delta
t = 0.
\end{multline}
\end{theorem}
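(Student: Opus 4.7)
My plan is to use the standard variational technique: fix $\eta\in C_{rd}^{1}$ with $\eta(a)=0$ but $\eta(T)$ \emph{free}, consider the one-parameter family of admissible competitors $\tilde{x}+\varepsilon\eta$, and exploit that $\varphi(\varepsilon):=\mathcal{L}[\tilde{x}+\varepsilon\eta]$ has a minimum at $\varepsilon=0$. Differentiating under the integral sign (legitimate by the standing smoothness and rd-continuity hypotheses on $f$) and noting that $x(T)$ enters $f$ as a constant parameter, I obtain
\begin{equation*}
0 = \varphi'(0) = \int_{a}^{T}\bigl[f_{x^{\sigma}}\,\eta^{\sigma} + f_{x^{\Delta}}\,\eta^{\Delta} + f_{z}\,\eta(T)\bigr]\,\Delta t,
\end{equation*}
where the partials of $f$ are evaluated along $\bigl(t,\tilde{x}^{\sigma}(t),\tilde{x}^{\Delta}(t),\tilde{x}(T)\bigr)$. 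The new term $\eta(T)\int_{a}^{T}f_{z}\,\Delta t$, absent from the classical theory, is precisely the contribution produced by the free-endpoint dependence of the Lagrangian.

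Next I would convert the $\eta^{\sigma}$ term into an $\eta^{\Delta}$ term so as to isolate a single kind of test-function derivative. Introduce the antiderivative $A(t):=\int_{a}^{t}f_{x^{\sigma}}(\tau,\tilde{x}^{\sigma}(\tau),\tilde{x}^{\Delta}(\tau),\tilde{x}(T))\,\Delta\tau$, which satisfies $A(a)=0$ and $A^{\Delta}=f_{x^{\sigma}}$ on $[a,T]^{\kappa}$. The second delta integration-by-parts formula of Section~\ref{sec:prm}, applied with $A$ and $\eta$ and then rearranged, yields
\begin{equation*}
\int_{a}^{T}f_{x^{\sigma}}(t)\,\eta^{\sigma}(t)\,\Delta t = A(T)\,\eta(T) - \int_{a}^{T}A(t)\,\eta^{\Delta}(t)\,\Delta t.
\end{equation*}
Substituting into $\varphi'(0)=0$ collapses the first variation to
\begin{equation*}
\int_{a}^{T}\bigl[f_{x^{\Delta}}(t)-A(t)\bigr]\eta^{\Delta}(t)\,\Delta t + \eta(T)\Bigl[A(T)+\int_{a}^{T}f_{z}\,\Delta t\Bigr] = 0.
\end{equation*}

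To finish, I would first restrict to variations with $\eta(T)=0$: the boundary term vanishes and Lemma~\ref{lemma:DR} forces $f_{x^{\Delta}}(t)-A(t)\equiv c$ on $[a,T]^{\kappa}$ for some constant $c$; delta-differentiating both sides gives the Euler--Lagrange equation \eqref{Euler}. Substituting $f_{x^{\Delta}}-A\equiv c$ back into the full identity, the remaining integral equals $c(\eta(T)-\eta(a))=c\,\eta(T)$, so only $\eta(T)\bigl[c+A(T)+\int_{a}^{T}f_{z}\,\Delta t\bigr]=0$ survives; choosing $\eta$ with $\eta(T)\neq 0$ yields $c+A(T)+\int_{a}^{T}f_{z}\,\Delta t=0$. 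The only delicate bookkeeping step, which I expect to be the main obstacle, is expressing $c$ intrinsically: the EL identity is only guaranteed on $[a,T]^{\kappa}$, and when $T$ is left-scattered this set excludes $T$ itself, so $c$ must be read off at the rightmost available point, $c=f_{x^{\Delta}}(\rho(T))-A(\rho(T))$; combined with the decomposition $A(T)-A(\rho(T))=\int_{\rho(T)}^{T}f_{x^{\sigma}}\,\Delta t$, this reproduces exactly \eqref{new:tc}, with the ``missing'' integral slab over $[\rho(T),T]$ reappearing explicitly.
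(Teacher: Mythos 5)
Your argument is correct and follows essentially the same route as the paper's own proof: the same one-parameter variation with $\eta(a)=0$ and $\eta(T)$ free, the same integration by parts via the antiderivative $A(t)=\int_a^t f_{x^{\sigma}}\,\Delta\tau$, the same appeal to the Dubois--Reymond lemma on the subclass with $\eta(T)=0$, and the same final decomposition of $\int_a^T f_{x^{\sigma}}\,\Delta t$ at $\rho(T)$ to identify the constant $c$ with $f_{x^{\Delta}}(\rho(T),\cdot)-A(\rho(T))$. Your explicit remark that the constancy identity must be read off at $\rho(T)$ when $T$ is left-scattered is exactly the bookkeeping the paper performs implicitly, so nothing is missing.
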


\begin{proof}
Suppose that $\mathcal{L}[\cdot]$ has a weak local minimum at
$\tilde{x}(\cdot)$. We can proceed as Lagrange did, by considering the value of $\mathcal{L}$ at a nearby function
$x = \tilde{x} + \varepsilon
h$, where $\varepsilon\in \mathbb{R}$ is a small parameter, $h(\cdot) \in
C^1_{rd}$, and $h(a) = 0$. Because $x(T)$ is free, we do not require
$h(\cdot)$ to vanish at $T$. Let
$$
\phi(\varepsilon) = \mathcal{L}[(\tilde{x} + \varepsilon h)(\cdot)] = \int_a^T
f(t,\tilde{x}^{\sigma}(t)+\varepsilon
h^{\sigma}(t),\tilde{x}^{\Delta}(t)+ \varepsilon
h^{\Delta}(t),\tilde{x}(T)+\varepsilon h(T)) \Delta t.
$$
A necessary condition for $\tilde{x}(\cdot)$ to be a minimum  is
given by
\begin{equation}
\label{eq:FT} \left.\phi'(\varepsilon)\right|_{\varepsilon=0} = 0
\Leftrightarrow \int_a^T \left[ f_{x^{\sigma}}(\cdots) h^{\sigma}(t)
+ f_{x^{\Delta}}(\cdots) h^{\Delta}(t) + f_z(\cdots)
h(T)\right]\Delta t = 0 \, ,
\end{equation}
where $(\cdots) =
\left(t,\tilde{x}^{\sigma}(t),\tilde{x}^{\Delta}(t),\tilde{x}(T)\right)$.
Integration by parts gives
\begin{equation*}
\int_a^T f_{x^{\sigma}}(\cdots) h^{\sigma}(t) \Delta t =\int_a^t
f_{x^{\sigma}}(\cdots) \Delta \tau h(t)|_{t=a}^{t=T}-\int_a^T
\left(\int_a^t f_{x^{\sigma}}(\cdots)\Delta \tau
h^{\Delta}(t)\right) \Delta t.
\end{equation*}
Because $h(a) = 0$, the necessary condition \eqref{eq:FT} can be
written as
\begin{multline}
\label{eq:aft:IP} 0 = \int_a^T
\left(f_{x^{\Delta}}(\cdots)-\int_a^t
f_{x^{\sigma}}(\cdots)\Delta \tau \right)  h^{\Delta}(t) \Delta t \\
+\int_a^T f_{x^{\sigma}}(\cdots)\Delta \tau h(T)+\int_a^T
f_{z}(\cdots)\Delta t h(T)
\end{multline}
for all $h(\cdot) \in C_{rd}^1$ such that $h(a) = 0$. In particular,
equation \eqref{eq:aft:IP} holds for the subclass of functions
$h(\cdot) \in C_{rd}^1$ that do vanish at $h(T)$. Thus, by the
Dubois-Reymond Lemma~\ref{lemma:DR}, we have
\begin{equation}
\label{eq:EL} f_{x^{\Delta}}(\cdots)-\int_a^t
f_{x^{\sigma}}(\cdots)\Delta \tau=c,
\end{equation}
for some $c\in \mathbb{R}$ and all $t\in[a,T]$. Equation \eqref{eq:aft:IP}
must be satisfied for all $h(\cdot) \in C_{rd}^1$ with $h(a) = 0$,
which includes functions $h(\cdot)$ that do not vanish at $T$.
Consequently, equations \eqref{eq:aft:IP} and \eqref{eq:EL}
imply that
\begin{equation}\label{eq1}
c+\int_a^Tf_{x^{\sigma}}(\cdots)\Delta t+\int_a^T
f_{z}(\cdots)\Delta t = 0.
\end{equation}
From the properties of the delta integral and from \eqref{eq:EL}, it follows that
\begin{multline*} c+\int_a^Tf_{x^{\sigma}}(\cdots)\Delta
t=c+\int_a^{\rho(T)}f_{x^{\sigma}}(\cdots)\Delta
t+\int_{\rho(T)}^Tf_{x^{\sigma}}(\cdots)\Delta t\\
=f_{x^{\Delta}}(\rho(T),\tilde{x}^{\sigma}(\rho
(T)),\tilde{x}^{\Delta}(\rho(T)),\tilde{x}(T))+\int_{\rho(T)}^Tf_{x^{\sigma}}(\cdots)\Delta
t \, .
\end{multline*}
Hence, we can rewrite \eqref{eq1} as \eqref{new:tc}.
\end{proof}

\begin{theorem}
\label{thm:mr:reg}
Let $\mathbb{T}$ be a regular time scale. If $\tilde{x}(\cdot)$ is a
solution of the problem \eqref{vp}--\eqref{bc}, then
\begin{equation*}
f_{x^{\Delta}}^{\Delta}(t,\tilde{x}^{\sigma}(t),\tilde{x}^{\Delta}(t),\tilde{x}(T))=
f_{x^{\sigma}}(t,\tilde{x}^{\sigma}(t),\tilde{x}^{\Delta}(t),\tilde{x}(T))
\end{equation*}
for all $t \in [a,T]^\kappa$. Moreover,
\begin{multline}\label{new:tc:reg}
f_{x^{\Delta}}(\rho(T),\tilde{x}^{\sigma}(\rho
(T)),\tilde{x}^{\Delta}(\rho(T)),\tilde{x}(T))\\
+\mu (\rho(T))f_{x^{\sigma}}(\rho (T)),\tilde{x}^{\sigma}(\rho
(T)),\tilde{x}^{\Delta}(\rho (T)),\tilde{x}(T))\\
+ \int_a^T
f_{z}(t,\tilde{x}^{\sigma}(t),\tilde{x}^{\Delta}(t),\tilde{x}(T))\Delta
t = 0.
\end{multline}
\end{theorem}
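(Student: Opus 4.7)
The plan is to derive Theorem~\ref{thm:mr:reg} as a direct corollary of Theorem~\ref{thm:mr}. The two statements differ only in the middle term of the transversality condition: Theorem~\ref{thm:mr} carries $\int_{\rho(T)}^{T} f_{x^{\sigma}}(\cdots)\Delta t$, while Theorem~\ref{thm:mr:reg} replaces this by $\mu(\rho(T))\, f_{x^{\sigma}}(\rho(T),\tilde{x}^{\sigma}(\rho(T)),\tilde{x}^{\Delta}(\rho(T)),\tilde{x}(T))$. So the entire task reduces to showing that, under regularity,
\[
\int_{\rho(T)}^{T} f_{x^{\sigma}}(\cdots)\,\Delta t \;=\; \mu(\rho(T))\, f_{x^{\sigma}}\bigl(\rho(T),\tilde{x}^{\sigma}(\rho(T)),\tilde{x}^{\Delta}(\rho(T)),\tilde{x}(T)\bigr).
\]

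The key observation I will exploit is that on a regular time scale the identity $\sigma(\rho(T))=T$ holds by definition. This splits the argument into two cases according to whether $T$ is left-dense or left-scattered. If $T$ is left-dense, then $\rho(T)=T$, so the integral $\int_{\rho(T)}^{T}$ collapses to zero; moreover regularity forces $\sigma(T)=\sigma(\rho(T))=T$, which means $T$ is also right-dense, hence $\mu(\rho(T))=\mu(T)=0$, and both sides vanish. If $T$ is left-scattered, then $\rho(T)<T$ and $\sigma(\rho(T))=T$, so $\mu(\rho(T))=T-\rho(T)$; applying the standard delta-integral identity (i) from Section~\ref{sec:prm},
\[
\int_{\rho(T)}^{\sigma(\rho(T))} f_{x^{\sigma}}(\cdots)\,\Delta t \;=\; \mu(\rho(T))\, f_{x^{\sigma}}(\rho(T),\ldots),
\]
delivers the desired equality.

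Finally I will substitute this simplification back into the transversality condition \eqref{new:tc} produced by Theorem~\ref{thm:mr}, obtaining \eqref{new:tc:reg}. The Euler--Lagrange equation is inherited unchanged from Theorem~\ref{thm:mr}, so no new work is needed there.

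There is essentially no hard step here; the proof is purely a manipulation of the boundary integral using the definition of a regular time scale together with property (i) of the delta integral. The only thing to be careful about is the dense case, where one must check that regularity synchronizes left-density with right-density so that both sides of the claimed identity vanish consistently.
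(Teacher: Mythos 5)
Your proposal is correct and follows essentially the same route as the paper: both reduce the theorem to Theorem~\ref{thm:mr} and rewrite the boundary integral $\int_{\rho(T)}^{T} f_{x^{\sigma}}(\cdots)\,\Delta t$ as $\mu(\rho(T)) f_{x^{\sigma}}(\rho(T),\ldots)$ using the delta-integral identity $\int_t^{\sigma(t)} f(\tau)\Delta\tau = \mu(t)f(t)$ together with the regularity relation $\sigma(\rho(T))=T$. Your explicit left-dense/left-scattered case check is a small extra precaution the paper leaves implicit, but it is not a different argument.
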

\begin{proof}
By Theorem~\ref{thm:mr} we need only to show that on a regular time scale equation \eqref{new:tc} can be written in the form
\eqref{new:tc:reg}. Indeed, from the properties of the delta integral it follows that
\begin{equation*}
\int_{\rho(T)}^Tf_{x^{\sigma}}(\cdots)\Delta t =\mu (\rho
(T))f_{x^{\sigma}}(\rho (T)),\tilde{x}^{\sigma}(\rho
(T)),\tilde{x}^{\Delta}(\rho (T)),\tilde{x}(T)) \, .
\end{equation*}
\end{proof}

Choosing $\mathbb{T}=\mathbb{R}$ in Theorem~\ref{thm:mr:reg}
we immediately obtain the corresponding
result in the classical context of the
calculus of variations. We were not able to find a reference,
in the vast and rich literature of the calculus of variations,
to the result given by Corollary~\ref{cor:anws:AZ:CV}.

\begin{corollary}
\label{cor:anws:AZ:CV}
If $\tilde{x}(\cdot)$ is a solution of the
problem
\begin{equation*}
\begin{gathered}
 \text{minimize} \quad   \mathcal{L}[x]
 = \int_a^T f(t,x(t),x'(t),x(T)) dt \\
x(a) = \alpha \, , \quad \alpha\in \mathbb{R}
\quad (x(T) \text{ free}),
\end{gathered}
\end{equation*}
where $a,T \in \mathbb{R}$, $a<T$, $x(\cdot) \in C^1$, then
the Euler-Lagrange equation
\begin{equation*}
\frac{d}{dt}
f_{x'}\left(t,\tilde{x}(t),\tilde{x}'(t),\tilde{x}(T)\right) =
f_x\left(t,\tilde{x}(t),\tilde{x}'(t),\tilde{x}(T)\right)
\end{equation*}
holds for all $t \in [a,T]$. Moreover,
\begin{equation}\label{bc:R}
f_{x'}\left(T,\tilde{x}(T),\tilde{x}'(T), \tilde{x}(T)\right) = -
\int_a^T f_z\left(t,\tilde{x}(t),\tilde{x}'(t),\tilde{x}(T)\right)
dt \, .
\end{equation}
\end{corollary}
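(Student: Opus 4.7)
The plan is to simply specialize Theorem~\ref{thm:mr:reg} to the case $\mathbb{T} = \mathbb{R}$, so almost all of the work is in verifying that the hypotheses apply and then translating each piece of notation into its classical counterpart.

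First I would observe that $\mathbb{R}$ is a regular time scale: every point is both left- and right-dense, so $\sigma(t) = \rho(t) = t$ for all $t$, and therefore both conditions (i) and (ii) of the definition of regularity hold trivially. Consequently $x^{\sigma} = x$, $x^{\Delta} = x'$, $\mu(t) \equiv 0$, and by the example following the definition of the delta integral we have $\int_a^T g(t)\,\Delta t = \int_a^T g(t)\,dt$ for every rd-continuous (equivalently, continuous) $g$. The set $[a,T]^{\kappa}$ becomes $[a,T)$, but since the integrand $f$ has continuous partial derivatives, the Euler-Lagrange identity \eqref{Euler} extends by continuity to all of $[a,T]$, producing the classical Euler-Lagrange equation stated in the corollary.

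Next I would translate the transversality condition \eqref{new:tc:reg}. With $\rho(T) = T$ and $\mu(\rho(T)) = 0$, the second summand vanishes, and the first summand becomes $f_{x'}\bigl(T,\tilde{x}(T),\tilde{x}'(T),\tilde{x}(T)\bigr)$. The third summand is just the Riemann integral $\int_a^T f_z\bigl(t,\tilde{x}(t),\tilde{x}'(t),\tilde{x}(T)\bigr)\,dt$. Thus \eqref{new:tc:reg} becomes
\begin{equation*}
f_{x'}\bigl(T,\tilde{x}(T),\tilde{x}'(T),\tilde{x}(T)\bigr)
+ \int_a^T f_z\bigl(t,\tilde{x}(t),\tilde{x}'(t),\tilde{x}(T)\bigr)\,dt = 0,
\end{equation*}
which after rearrangement is exactly \eqref{bc:R}.

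There is no real obstacle here: the corollary is a routine specialization. The only point that requires a small argument is the extension of the Euler-Lagrange equation from $[a,T)$ to the closed interval $[a,T]$, which follows from the assumed continuity of the partial derivatives of $f$. Everything else is direct substitution of the identities $\sigma = \rho = \mathrm{id}$ and $\mu = 0$ valid for $\mathbb{T} = \mathbb{R}$.
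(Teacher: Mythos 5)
Your proof is correct and follows exactly the paper's route: the paper obtains this corollary by specializing Theorem~\ref{thm:mr:reg} to $\mathbb{T}=\mathbb{R}$, using $\sigma=\rho=\mathrm{id}$, $\mu\equiv 0$, and the identification of the delta integral with the Riemann integral, precisely as you do. The only cosmetic remark is that for $\mathbb{T}=\mathbb{R}$ the point $T$ is left-dense, so $(\rho(T),T]=\emptyset$ and $[a,T]^{\kappa}=[a,T]$ already, making your continuity-extension step unnecessary (though harmless).
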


\begin{remark}
In the classical setting $f$ does not depend on $x(T)$, \textrm{i.e.}, $f_z = 0$. Then, \eqref{bc:R}
reduces to the well known natural boundary condition
$f_{x'}\left(T,\tilde{x}(T),\tilde{x}'(T)\right) = 0$.
\end{remark}

Similarly, we can obtain other corollaries
by choosing different time scales. Corollary~\ref{cor:Z}
is obtained from Theorem~\ref{thm:mr:reg}
letting $\mathbb{T}=\mathbb{Z}$.

\begin{corollary}
\label{cor:Z}
If $\tilde{x}(\cdot)$ is a solution of the
discrete-time problem
\begin{equation*}
\begin{gathered}
 \text{minimize} \quad   \mathcal{L}[x]
 = \sum _{t=a}^{T-1}f(t,x(t+1),\Delta x(t),x(T))\\
x(a) = \alpha \, , \quad \alpha\in \mathbb{R}
\quad (x(T) \text{ free}),
\end{gathered}
\end{equation*}
where $a,T \in\mathbb{Z}$, $a<T$, then
\begin{equation*}
f_{x}\left(t,\tilde{x}(t+1),\Delta\tilde{x}(t),\tilde{x}(T)\right)
= \Delta f_{\Delta
x}\left(t,\tilde{x}(t+1),\Delta\tilde{x}(t),\tilde{x}(T)\right)
\end{equation*}
for all $t \in [a,T-1]$. Moreover,
\begin{multline}\label{bc:Z}
f_{x}\left(T-1,\tilde{x}(T),\Delta\tilde{x}(T-1),\tilde{x}(T)\right)
+f_{\Delta x}\left(T-1,\tilde{x}(T),\Delta\tilde{x}(T-1),\tilde{x}(T)\right)\\
=-\sum _{t=a}^{T-1}f_z(t,\tilde{x}(t+1),\Delta
\tilde{x}(t),\tilde{x}(T)) \, .
\end{multline}
\end{corollary}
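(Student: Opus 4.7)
The plan is to deduce Corollary~\ref{cor:Z} as a direct specialization of Theorem~\ref{thm:mr:reg} with $\mathbb{T}=\mathbb{Z}$. The very first check is that $\mathbb{Z}$ is a regular time scale: for every $t\in\mathbb{Z}$ one has $\sigma(\rho(t))=\sigma(t-1)=t$ and $\rho(\sigma(t))=\rho(t+1)=t$, so the hypotheses of Theorem~\ref{thm:mr:reg} are satisfied. This reduces everything to a mechanical translation of the conclusions of that theorem into the $\mathbb{Z}$-dictionary.

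Next I would gather the relevant $\mathbb{Z}$-incarnations of the time-scale objects. On $\mathbb{Z}$ we have $\sigma(t)=t+1$, $\rho(t)=t-1$ and $\mu(t)\equiv 1$, so $x^{\sigma}(t)=x(t+1)$ and $x^{\Delta}(t)=\Delta x(t)$. The partial derivatives $f_{x^{\sigma}}$ and $f_{x^{\Delta}}$ appearing in Theorem~\ref{thm:mr:reg} are simply the partials of $f$ with respect to its second and third slots, which in the discrete notation used in the statement are written $f_{x}$ and $f_{\Delta x}$. Moreover, $[a,T]^{\kappa}=[a,T]\setminus(\rho(T),T]=[a,T-1]$, $\rho(T)=T-1$, and the delta integral collapses to the finite sum $\int_a^T g(t)\,\Delta t=\sum_{t=a}^{T-1}g(t)$.

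With this dictionary, the Euler-Lagrange equation \eqref{Euler} becomes $\Delta f_{\Delta x}(t,\tilde{x}(t+1),\Delta\tilde{x}(t),\tilde{x}(T))=f_x(t,\tilde{x}(t+1),\Delta\tilde{x}(t),\tilde{x}(T))$ for every $t\in[a,T-1]$, which is exactly the first assertion of the corollary. For the natural boundary condition, I would substitute $\rho(T)=T-1$, $\mu(\rho(T))=1$ and the sum-form of the integral into \eqref{new:tc:reg}, obtaining $f_{\Delta x}(T-1,\tilde{x}(T),\Delta\tilde{x}(T-1),\tilde{x}(T))+f_{x}(T-1,\tilde{x}(T),\Delta\tilde{x}(T-1),\tilde{x}(T))+\sum_{t=a}^{T-1}f_z(t,\tilde{x}(t+1),\Delta\tilde{x}(t),\tilde{x}(T))=0$, and a trivial rearrangement yields \eqref{bc:Z}.

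There is really no serious analytic obstacle, since the entire content is already packaged in Theorem~\ref{thm:mr:reg}; the only point requiring vigilance is the notational bookkeeping (matching the partials $f_{x^{\sigma}}, f_{x^{\Delta}}$ of the abstract statement with the discrete-time partials $f_x, f_{\Delta x}$, and correctly identifying $[a,T]^{\kappa}$ with $[a,T-1]$). Once that bookkeeping is in place, \eqref{Euler} and \eqref{new:tc:reg} specialize verbatim to the two assertions of the corollary.
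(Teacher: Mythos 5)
Your proposal is correct and follows exactly the route the paper intends: the text introduces Corollary~\ref{cor:Z} precisely as the specialization of Theorem~\ref{thm:mr:reg} to $\mathbb{T}=\mathbb{Z}$, and your translation of $\sigma$, $\rho$, $\mu$, $[a,T]^{\kappa}$, and the delta integral into discrete-time notation matches the stated conclusions, including the rearrangement of \eqref{new:tc:reg} into \eqref{bc:Z}.
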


\begin{remark}
In the case $f$ does not depend on $x(T)$, \eqref{bc:Z}
reduces to the natural boundary condition for the discrete variational problem (see \cite[Theorem 8.3]{KP}).
\end{remark}

Let now $\mathbb{T}=q^{\mathbb{N}_{0}}$, $q>1$.
We then obtain the analogous result for
the $q$-calculus of variations. In what
follows we use the standard notation
$D_q$ for the $q$-derivative:
\begin{equation}
\label{eq:q-deriv}
D_q x(t) = \frac{x(qt)-x(t)}{qt-t}
\end{equation}
(\textrm{cf.} Example~\ref{ex:der:QC}).
The $q$-derivative \eqref{eq:q-deriv}
is also known in the literature
as the Jackson derivative \cite{Jackson}.

\begin{corollary}
If $\tilde{x}(\cdot)$ is
a solution of the problem
\begin{equation*}
\begin{gathered}
\text{minimize} \quad   \mathcal{L}[x]
= \sum _{t=a}^{Tq^{-1}}(q-1)t
f \left(t,x(qt),D_q x(t),x(T)\right)\\
x(a) = \alpha \, , \quad \alpha\in \mathbb{R}
\quad (x(T) \text{ free}) \, ,
\end{gathered}
\end{equation*}
where $a,T \in\mathbb{T} $, $a<T$, then
\begin{equation*}
f_{x}\left(t,\tilde{x}(q t),D_q \tilde{x}(t),\tilde{x}(T)\right)
= D_q f_{v}\left(t,\tilde{x}(qt),D_q \tilde{x}(t),\tilde{x}(T)\right)
\end{equation*}
for all $t \in \left[a,Tq^{-1}\right]$. Moreover,
\begin{multline*}
f_{v}\left(Tq^{-1},\tilde{x}(T),
D_q \tilde{x}\left(Tq^{-1}\right),\tilde{x}(T)\right)\\
+T(1-q^{-1})f_{x}\left(Tq^{-1},\tilde{x}(T),D_q \tilde{x}\left(Tq^{-1}\right),\tilde{x}(T)\right)\\
=-\sum_{t=a}^{Tq^{-1}}f_z
\left(t,\tilde{x}(qt),D_q \tilde{x}(t),\tilde{x}(T)\right).
\end{multline*}
\end{corollary}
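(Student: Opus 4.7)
The plan is to derive the corollary as a direct specialization of Theorem~\ref{thm:mr:reg} to the time scale $\mathbb{T}=q^{\mathbb{N}_0}$; essentially no analysis remains, only bookkeeping with the $q$-calculus counterparts of $\sigma$, $\rho$, $\mu$, and $\int\cdot\,\Delta t$.

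First I would check that $\mathbb{T}=q^{\mathbb{N}_0}$ is regular: on this time scale $\sigma(t)=qt$ and $\rho(t)=q^{-1}t$, hence $\sigma(\rho(t))=\rho(\sigma(t))=t$ for every $t\in\mathbb{T}$, so Theorem~\ref{thm:mr:reg} applies. Next I would collect the translation dictionary used throughout the substitution: $\mu(t)=(q-1)t$, $x^{\Delta}=D_q x$, $x^{\sigma}(t)=x(qt)$, the delta integral $\int_{a}^{T}g(t)\Delta t=\sum_{t=a}^{Tq^{-1}}(q-1)t\,g(t)$ (which matches exactly the form of the functional $\mathcal{L}$ appearing in the statement), $[a,T]^{\kappa}=[a,Tq^{-1}]$, and the boundary graininess $\mu(\rho(T))=(q-1)\cdot Tq^{-1}=T(1-q^{-1})$. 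I would also note that $\sigma(\rho(T))=T$, which lets me identify $\tilde x^{\sigma}(\rho(T))$ with $\tilde x(T)$ in the boundary terms.

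Second, I would substitute these identifications into the Euler--Lagrange equation \eqref{Euler}: the left-hand side becomes $D_q f_v(t,\tilde x(qt),D_q\tilde x(t),\tilde x(T))$, the right-hand side becomes $f_x$ of the same arguments, and the domain $[a,T]^{\kappa}$ becomes $[a,Tq^{-1}]$, yielding the stated $q$-Euler--Lagrange equation. Substituting into the natural boundary condition \eqref{new:tc:reg} in the same way produces, at $t=\rho(T)=Tq^{-1}$, a first term $f_v(Tq^{-1},\tilde x(T),D_q\tilde x(Tq^{-1}),\tilde x(T))$; a second term with the factor $T(1-q^{-1})=\mu(\rho(T))$ multiplying $f_x$ at the same point; and a final delta integral that becomes the $q$-sum of $f_z$. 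Moving the $f_z$ term to the right-hand side gives the claim.

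The only real stumbling block is notational: I have to keep the shifted arguments $\tilde x(qt)$ and $D_q\tilde x(t)$ properly coordinated inside every partial derivative, correctly identify $\tilde x^{\sigma}(\rho(T))$ with $\tilde x(T)$, and use $Tq^{-1}$ (rather than $T$) as the upper limit of the $q$-sums. No new analytic estimate beyond those already established in Theorems~\ref{thm:mr} and~\ref{thm:mr:reg} is required.
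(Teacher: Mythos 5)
Your proposal is correct and matches the paper's (implicit) route exactly: the paper states this corollary as an immediate specialization of Theorem~\ref{thm:mr:reg} to $\mathbb{T}=q^{\mathbb{N}_0}$, with precisely the dictionary you list ($\sigma(t)=qt$, $\rho(t)=q^{-1}t$, $\mu(t)=(q-1)t$, $x^{\Delta}=D_qx$, $[a,T]^{\kappa}=[a,Tq^{-1}]$, and the delta integral as the $q$-sum). Your explicit verification of regularity and the identification $\tilde{x}^{\sigma}(\rho(T))=\tilde{x}(T)$ are the only nontrivial checks, and both are handled correctly.
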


We illustrate the application
of our Theorem~\ref{thm:mr} with an example.

\begin{example}\label{fromB}
Consider the problem
\begin{equation}\label{AD}
 \text{minimize} \quad   \mathcal{L}[x]=\int_{0}^{1}\left(\sqrt{1+(x^{\Delta}(t))^2}+\beta(x(1)-1)^2 \right)\Delta
     t ,
\end{equation}
where $\beta \in \mathbb{R}^{+}$, subject to the boundary condition
\begin{equation}
\label{eq:bc}
    x(0)=0  \quad (x(1) \text{ free}).
\end{equation}
Since
\begin{equation*}
f(t,x^{\sigma},x^{\Delta},z)=\sqrt{1+(x^{\Delta})^2}+\beta(z-1)^2,
\end{equation*}
we have
\begin{equation*}
\begin{split}
f_{x^{\sigma}}(t,x^{\sigma},x^{\Delta},z)&=0, \\
f_{x^{\Delta}}(t,x^{\sigma},x^{\Delta},z)&=\frac{x^{\Delta}}{\sqrt{1+(x^{\Delta})^2}},\\
f_z(t,x^{\sigma},x^{\Delta},z)&=2\beta(z-1).
\end{split}
\end{equation*}
If $\tilde{x}(\cdot)$ is a local minimizer of
\eqref{AD}--\eqref{eq:bc}, then conditions
\eqref{Euler}--\eqref{new:tc} must hold, \textrm{i.e.},
\begin{equation}\label{AD1}
f_{x^{\Delta}}^{\Delta}(t,\tilde{x}^{\sigma}(t),\tilde{x}^{\Delta}(t),\tilde{x}(T))=0,
\end{equation}
\begin{equation}\label{AD2}
f_{x^{\Delta}}(\rho(1),\tilde{x}^{\sigma}(\rho(1)),\tilde{x}^{\Delta}(\rho(1)),\tilde{x}(1))=-\int_{0}^{1}2\beta(\tilde{x}(1)-1)\Delta
t.
\end{equation}
Equation \eqref{AD1} implies that there exists a constant $d\in
\mathbb{R}$ such that
\begin{equation*}
\tilde{x}^{\Delta}(t)=d\sqrt{1+(\tilde{x}^{\Delta}(t))^2}.
\end{equation*}
Solving the latter equation with initial condition $\tilde{x}(0)=0$
we obtain $\tilde{x}(t)=\alpha t$, where $\alpha \in \mathbb{R}$. In order to determine $\alpha$ we use the natural boundary condition \eqref{AD2}, which can be rewritten as
\begin{equation}\label{AD3}
\frac{\alpha}{\sqrt{1+\alpha^{2}}}=-2\beta(\alpha-1).
\end{equation}
The real solution of equation \eqref{AD3} is

\begin{equation*}
\alpha(\beta) = {\frac {2\,{\beta}^{2}+\sqrt
{4\,{\beta}^{4}+{\beta}^{2}}}{{4 \beta }^{2}}}-{\frac {\sqrt
{-8\,{\beta}^{2}+4\,\sqrt {4\,{\beta}^{4}+{ \beta}^{2}}+1}}{4
\beta}}.
\end{equation*}
Hence, $\tilde{x}(t)=\alpha(\beta) t$
is a candidate to be a minimizer with
\begin{multline*}
\mathcal{L}[\tilde{x}] = \sqrt {1+ \left( {\frac
{2\,{\beta}^{2}+\sqrt {4\,{\beta}^{4}+{ \beta}^{2}}}{{4
\beta}^{2}}}- {\frac {\sqrt {-8\,{\beta}^{2}+4\,
\sqrt {4\,{\beta}^{4}+{\beta}^{2}}+1}}{4 \beta}} \right) ^{2}}\\
+\beta\,
 \left( {\frac {2\,{\beta}^{2}+\sqrt {4\,{\beta}^{4}+{\beta}^{2}}
}{{4 \beta}^{2}}}-{\frac {\sqrt {-8\,{\beta}^{2}+4\,\sqrt {4\,{
\beta}^{4}+{\beta}^{2}}+1}}{4 \beta}}-1 \right) ^{2}.
\end{multline*}
The extremal $\tilde{x}(t) = \alpha(\beta) t$ is represented in
Figure~\ref{fig:1} for different values of $\beta$.
\begin{figure}
\label{fig:1}
\begin{center}
\psfrag{beta=1}{{\tiny $\beta=1$}}
\psfrag{beta=2}{{\tiny $\beta=2$}}
\psfrag{beta=15}{{\tiny $\beta=15$}}
\psfrag{beta=inf}{{\tiny $\beta=\infty$}}
\psfrag{1}{\hspace*{-0.1cm}{\tiny 1}}
\psfrag{0.8}{\hspace*{-0.1cm}{\tiny 0.8}}
\psfrag{0.6}{\hspace*{-0.1cm}{\tiny 0.6}}
\psfrag{0.4}{\hspace*{-0.1cm}{\tiny 0.4}}
\psfrag{0.2}{\hspace*{-0.1cm}{\tiny 0.2}}
\psfrag{0}{\hspace*{-0.1cm}{\tiny 0}}
\psfrag{t}{{\tiny $t$}}
\includegraphics[width=12cm,height=8cm,angle=-90]{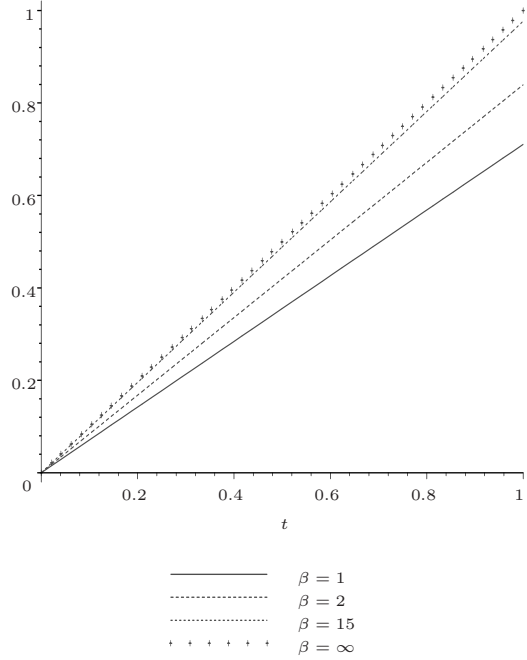}
\end{center}
\vspace*{-2cm}
\caption{The extremal $\tilde{x}(t) = \alpha(\beta) t$
of Example~\ref{fromB} for different values of the parameter $\beta$.}
\end{figure}
We note  that
$$\lim_{\beta\rightarrow \infty} \alpha(\beta)= 1,$$
and in the limit, when $\beta = \infty$, the solution of
\eqref{AD}--\eqref{eq:bc} coincides with the solution of the
following problem with fixed initial and terminal points
(\textrm{cf.} \cite{b7}):
\begin{equation*}
\begin{gathered}
 \text{minimize} \quad   \mathcal{L}[x]
 =\int_{0}^{1}\left(\sqrt{1+(x^{\Delta}(t))^2}\right)\Delta t \\
 x(0) = 0 \, , \quad x(1) = 1 \, .
\end{gathered}
\end{equation*}
\end{example}


\subsection{Hamiltonian approach}
\label{sub:sec:Ham}

Hamiltonian systems on time scales
were introduced in \cite{b1} and have a central role
in the study of optimal control problems on time scales \cite{HZ2}. Let us consider now the more general problem
\begin{equation}\label{ocp}
 \text{minimize} \quad   \mathcal{L}[x,u]=\int_{a}^{T}f(t,x^{\sigma}(t),u^{\sigma}(t),x(T))\Delta
     t
\end{equation}
subject to
\begin{equation}\label{ocbc:1}
x^{\Delta}(t)=g(t,x^{\sigma}(t),u^{\sigma}(t),x(T)),
\end{equation}
\begin{equation}\label{ocbc:2}
x(a)=\alpha\, ,   \quad \alpha\in \mathbb{R}
\quad (x(T) \text{ free}),
\end{equation}
where $f(t,x,v,z):[a,T]\times \mathbb{R} \times \mathbb{R} \times \mathbb{R} \rightarrow \mathbb{R}$
and $g(t,x,v,z):[a,T]\times \mathbb{R} \times \mathbb{R} \times \mathbb{R} \rightarrow \mathbb{R}$
have partial continuous derivatives with respect to $x,v,z$ for all $t\in[a,T]$, and $f(t,\cdot,\cdot,\cdot)$, $g(t,\cdot,\cdot,\cdot)$
and their partial derivatives are rd-continuous for all $t$.
In the particular case $g(t,x,v,z) = v$ problem \eqref{ocp}--\eqref{ocbc:2} reduces to \eqref{vp}--\eqref{bc}.

\begin{theorem}
\label{thm:mr:ham}
If $(\tilde{x}(\cdot),\tilde{u}(\cdot))$ is a weak local minimizer for the problem \eqref{ocp}--\eqref{ocbc:2}, then there is a
function $\tilde{\lambda}(\cdot)$ such that the triple
$(\tilde{x}(\cdot),\tilde{u}(\cdot),\tilde{\lambda}(\cdot))$
satisfies: (i) the Hamiltonian system
\begin{gather}
\label{ham2}
x^{\Delta}(t) =
H_{\lambda^{\sigma}}(t,x^{\sigma}(t),u^{\sigma}(t),\lambda^{\sigma}(t),x(T)),\\
\label{ham1}
(\lambda^{\sigma}(t))^{\Delta} =
-H_{x^{\sigma}}(t,x^{\sigma}(t),u^{\sigma}(t),\lambda^{\sigma}(t),x(T)),
\end{gather}
(ii) the stationary condition
\begin{equation}\label{ham3}
H_{u^{\sigma}}(t,x^{\sigma}(t),u^{\sigma}(t),\lambda^{\sigma}(t),x(T))=0,
\end{equation}
for all $t \in [a,T]^\kappa$; and (iii) the transversality condition
\begin{multline}\label{ham4}
\lambda^{\sigma}(\rho(T))
=\int_{\rho(T)}^{T}H_{x^{\sigma}}(t,x^{\sigma}(t),u^{\sigma}(t),\lambda^{\sigma}(t),x(T))\Delta
t\\
+\int_{a}^{T}H_{z}(t,x^{\sigma}(t),u^{\sigma}(t),\lambda^{\sigma}(t),x(T))\Delta
t,
\end{multline}
where the Hamiltonian
$H(t,x,v,\lambda,z):[a,T]\times \mathbb{R} \times \mathbb{R} \times \mathbb{R} \times \mathbb{R}\rightarrow \mathbb{R}$ is
defined by
\begin{equation}\label{hamil}
H(t,x^{\sigma},u^{\sigma},\lambda^{\sigma},z)
=f(t,x^{\sigma},u^{\sigma},z)+\lambda^{\sigma}g(t,x^{\sigma},u^{\sigma},z).
\end{equation}
\end{theorem}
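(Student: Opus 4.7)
The plan is to reduce the constrained problem \eqref{ocp}--\eqref{ocbc:2} to an unconstrained problem of the type treated in Theorem~\ref{thm:mr} by adjoining the dynamics \eqref{ocbc:1} with a multiplier $\lambda(\cdot)$. Concretely, introduce the augmented functional
\begin{equation*}
\tilde{J}[x,u,\lambda] = \int_a^T\bigl[H(t,x^{\sigma}(t),u^{\sigma}(t),\lambda^{\sigma}(t),x(T)) - \lambda^{\sigma}(t)\,x^{\Delta}(t)\bigr]\Delta t,
\end{equation*}
which coincides with $\mathcal{L}[x,u]$ on every $(x,u)$ satisfying \eqref{ocbc:1}. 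The goal is then to exhibit a $\tilde{\lambda}(\cdot)$ such that $(\tilde{x},\tilde{u},\tilde{\lambda})$ is a free critical point of $\tilde{J}$, so that the three Hamiltonian conditions fall out of the three independent families of variations.

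Assuming the multiplier $\tilde{\lambda}$ is in hand, I would carry out the three variations separately. Varying $\lambda \to \lambda + \varepsilon\eta$: since $\lambda$ has no boundary prescription and $\lambda^{\Delta}$ does not appear, the first-order condition forces $x^{\Delta} = H_{\lambda^{\sigma}}$ pointwise, which is exactly \eqref{ham2}. Varying $u\to u+\varepsilon\eta$: $u$ enters only through $u^{\sigma}$ with no derivative or boundary term, so arbitrariness of $\eta^{\sigma}$ in the fundamental identity yields $H_{u^{\sigma}}=0$ on $[a,T]^{\kappa}$, i.e. \eqref{ham3}. Varying $x\to x+\varepsilon h$ with $h(a)=0$ and $h(T)$ free: with $u^{\sigma}$ and $\lambda^{\sigma}$ treated as fixed rd-continuous parameters, set
\begin{equation*}
L(t,x^{\sigma},x^{\Delta},z) = H(t,x^{\sigma},u^{\sigma},\lambda^{\sigma},z) - \lambda^{\sigma}\,x^{\Delta},
\end{equation*}
so that $\tilde{J}$ has exactly the form \eqref{vp}. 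Applying Theorem~\ref{thm:mr} and reading off $L_{x^{\sigma}}=H_{x^{\sigma}}$, $L_{x^{\Delta}}=-\lambda^{\sigma}$, $L_{z}=H_{z}$, the Euler--Lagrange equation \eqref{Euler} becomes $-(\lambda^{\sigma})^{\Delta}=H_{x^{\sigma}}$, which is \eqref{ham1}, while the transversality identity \eqref{new:tc} reads
\begin{equation*}
-\lambda^{\sigma}(\rho(T))+\int_{\rho(T)}^{T}H_{x^{\sigma}}\,\Delta t+\int_{a}^{T}H_{z}\,\Delta t=0,
\end{equation*}
which rearranges to \eqref{ham4}.

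The main obstacle is the justification of the Lagrange multiplier step, that is, the very existence of $\tilde{\lambda}$. A clean way to bypass a general multiplier rule on time scales is to \emph{define} $\tilde{\lambda}$ a posteriori: let $\tilde{\lambda}^{\sigma}$ be the unique solution of the adjoint delta equation \eqref{ham1} on $[a,T]^{\kappa}$ with terminal value
\begin{equation*}
\tilde{\lambda}^{\sigma}(\rho(T)) = \int_{\rho(T)}^{T}H_{x^{\sigma}}\,\Delta t+\int_{a}^{T}H_{z}\,\Delta t,
\end{equation*}
and then verify that \eqref{ham3} follows by substituting admissible variations $(x+\varepsilon h_{1}, u+\varepsilon h_{2})$ into $\mathcal{L}$, using \eqref{ocbc:1} to express $h_{1}^{\Delta}$ in terms of $h_{1}^{\sigma}$ and $h_{2}^{\sigma}$, integrating by parts with the constructed $\tilde{\lambda}$, and exploiting the arbitrariness of $h_{2}$. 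Alternatively, one can cite the multiplier framework of \cite{b1,HZ2}. Either route keeps the technical core entirely inside Theorem~\ref{thm:mr}; the novelty is only in correctly tracking the contributions of the $x(T)$-dependence, which produces the extra $\int_{a}^{T}H_{z}\,\Delta t$ term in \eqref{ham4}.
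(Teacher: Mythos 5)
Your proposal follows essentially the same route as the paper: adjoin the dynamics with a multiplier, rewrite the augmented functional as $\int_a^T\bigl[H-\lambda^{\sigma}x^{\Delta}\bigr]\Delta t$, and apply Theorem~\ref{thm:mr} to read off \eqref{ham2}--\eqref{ham4}. You are in fact more explicit than the paper about the one delicate point---the existence of $\tilde{\lambda}$, which the paper dispatches with a bare appeal to ``the Lagrange multiplier rule''---and your a posteriori construction of $\tilde{\lambda}$ from the adjoint equation with the prescribed terminal value is a sensible way to close that gap.
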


\begin{remark}
In Theorem~\ref{thm:mr:ham} we are assuming to have
a time scale $\mathbb{T}$ for which
$\lambda^{\sigma}(t)$ is delta-differentiable on $[a,T]^\kappa$.
Examples of time scales for which $\sigma$ is not delta-differentiable are easily found \cite{BP}.
\end{remark}

\begin{proof}
Using the Lagrange multiplier rule we can form an expression
$\lambda^{\sigma}(g-x^{\Delta})$ for each value of $t$. The
replacement of $f$ by $f+\lambda^{\sigma}(g-x^{\Delta})$
in the objective functional give us the following new problem:
\begin{multline}\label{new:ocp}
\text{minimize} \quad \mathcal{I}[x,u,\lambda]
=\int_{a}^{T}\Bigl\{f(t,x^{\sigma}(t),u^{\sigma}(t),x(T))\\
+\lambda^{\sigma}(t)\left[g(t,x^{\sigma}(t),u^{\sigma}(t),x(T))-x^{\Delta}(t)\right]\Bigr\}\Delta t ,
\end{multline}
subject to
\begin{equation}\label{new:ocbc}
x(a)=\alpha  \quad (x(T) \text{ free}).
\end{equation}
Suppose that $(\tilde{x}(\cdot),\tilde{u}(\cdot))$ is a weak local minimizer for the problem \eqref{ocp}--\eqref{ocbc:2}. Then the
triple $(\tilde{x}(\cdot),\tilde{u}(\cdot),\tilde{\lambda}(\cdot))$
should be a weak local minimizer for the problem
\eqref{new:ocp}--\eqref{new:ocbc}. Using \eqref{hamil} in
\eqref{new:ocp} we write the  functional in the form
\begin{equation}\label{ham:ocp}
\mathcal{I}[x,u,\lambda]=
\int_{a}^{T}[H(t,x^{\sigma},u^{\sigma},\lambda^{\sigma},x(T))
-\lambda^{\sigma}(t)x^{\Delta}(t)]\Delta t .
\end{equation}
Applying Theorem \ref{thm:mr} to the problem
\eqref{new:ocp}--\eqref{new:ocbc}, in view of \eqref{ham:ocp}, gives conditions \eqref{ham2}--\eqref{ham4}.
\end{proof}

\begin{remark}
If $\mathbb{T}$ is a regular time scale, then by Theorem
\ref{thm:mr:reg} the transversality condition \eqref{ham4} can be
written in the form
\begin{multline*}
\lambda^{\sigma}(\rho(T))=\mu(\rho(T))H_{x^{\sigma}}(\rho(T),x^{\sigma}(\rho(T)),
u^{\sigma}(\rho(T)),\lambda^{\sigma}(\rho(T)),x(T))\\
+\int_{a}^{T}H_{z}(t,x^{\sigma}(t),u^{\sigma}(t),\lambda^{\sigma}(t),x(T))\Delta
t.
\end{multline*}
\end{remark}

\begin{example}\label{ham:nc}
Consider the problem
\begin{equation}\label{ham:nc:p}
\begin{gathered}
\text{minimize} \quad
\mathcal{L}[x,u]=\int_{0}^{3}(u^{\sigma}(t))^2+t^{2}(x(3)-1)^2
\Delta
     t ,\\
x^{\Delta}(t)=u^{\sigma}(t),
 \end{gathered}
\end{equation}
\begin{equation}
\label{ham:nc:bc}
    x(0)=0,\,  \quad (x(3) \text{ free}).
\end{equation}
To find candidate solutions for the problem, we start by forming the
Hamiltonian function
\begin{equation*}
H(t,x^{\sigma},u^{\sigma},\lambda^{\sigma},x(3))=(u^{\sigma})^2+t^{2}(x(3)-1)^2+\lambda^{\sigma}u^{\sigma}.
\end{equation*}
Candidate solutions $(\tilde{x}(\cdot),\tilde{u}(\cdot))$ are those
satisfying the following conditions:
\begin{equation}\label{ham:nc:h1}
(\lambda^{\sigma}(t))^{\Delta}=0,
\end{equation}
\begin{equation}\label{ham:nc:2}
u^{\sigma}(t)=x^{\Delta}(t), \quad x(0)=0,
\end{equation}
\begin{equation}\label{ham:nc:3}
2u^{\sigma}(t)+\lambda^{\sigma}(t)=0,
\end{equation}
\begin{equation}\label{ham:nc:h4}
\lambda^{\sigma}(\rho(3))=\int_{0}^{3}2t^{2}(x(3)-1)\Delta t.
\end{equation}
From \eqref{ham:nc:h1}--\eqref{ham:nc:3} we conclude that
$\tilde{x}(t)=ct$. In order to determine $c$ we use the
transversality condition \eqref{ham:nc:h4} which we can write as
\begin{equation}\label{ham:nc:h5}
-2c=\int_{0}^{3}2t^{2}(3c-1)\Delta t.
\end{equation}
The value of the delta integral in \eqref{ham:nc:h5} depends on the time scale. Let us compute, for example,
this delta integral on $\mathbb{T}=\mathbb{Z}$
and on $\mathbb{T}= q^{\mathbb{N}_0}$ for $q = 2$.
For $\mathbb{T}=\mathbb{Z}$
\begin{equation}\label{ham:nc:di}
\int_{0}^{3}2t^{2}(3c-1)\Delta
t=\sum_{k=0}^{2}2k^{2}(3c-1)=10(3c-1).
\end{equation}
Equations \eqref{ham:nc:h5} and \eqref{ham:nc:di} yield
$c=\frac{5}{16}$. Therefore, the extremal of the problem
\eqref{ham:nc:p}--\eqref{ham:nc:bc} on $\mathbb{T}=\mathbb{Z}$ is
$\tilde{x}(t)=\frac{5}{16}t$. On the other hand, for
$\mathbb{T}= 2^{\mathbb{N}_0}$ we have
\begin{equation*}
\int_{0}^{3}2t^{2}(3c-1)\Delta
t=2\left(1- 3 c \right)\sum_{t \in \{1, 2\}} t^3=18(1-3c) \, ,
\end{equation*}
and $\tilde{x}(t)=\frac{9}{26}t$.
\end{example}

When $\mathbb{T} = \mathbb{R}$ we immediately obtain
from Theorem~\ref{thm:mr:ham} the following corollary.

\begin{corollary}
\label{Cor:T=R:Hamilt}
If $(\tilde{x}(\cdot),\tilde{u}(\cdot))$ is a solution of the
problem
\begin{equation*}
\begin{gathered}
\text{minimize} \quad \mathcal{L}[x,u]
= \int_a^T f(t,x(t),u(t),x(T)) dt \\
x'(t)=g(t,x(t),u(t),x(T))\\
x(a) = \alpha \quad (x(T) \text{ free}),
\end{gathered}
\end{equation*}
where $\alpha$, $a$, $T \in \mathbb{R}$, $a<T$, then there exists
a function $\tilde{\lambda}(\cdot)$
such that the triple
$(\tilde{x}(\cdot),\tilde{u}(\cdot),\tilde{\lambda}(\cdot))$
satisfies the Hamiltonian system
\begin{equation*}
x'(t) = H_{\lambda} \, , \quad \lambda'(t)=-H_{x} \, ,
\end{equation*}
the stationary condition
\begin{equation*}
H_{u}=0,
\end{equation*}
for all $t \in [a,T]$, and the transversality condition
\begin{equation}
\label{eq:tc}
\lambda(T) =\int_{a}^{T}H_{z} dt,
\end{equation}
where the Hamiltonian $H$ is defined by
$H(t,x,u,\lambda,z)=f(t,x,u,z)+\lambda g(t,x,u,z)$.
\end{corollary}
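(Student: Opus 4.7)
The plan is to obtain Corollary~\ref{Cor:T=R:Hamilt} as a direct specialization of Theorem~\ref{thm:mr:ham} to the time scale $\mathbb{T}=\mathbb{R}$. First I would record the standard identifications on $\mathbb{R}$: the forward and backward jump operators are the identity, so $\sigma(t)=\rho(t)=t$, the graininess satisfies $\mu(t)=0$, and for any admissible function we have $x^\sigma(t)=x(t)$, $u^\sigma(t)=u(t)$, $\lambda^\sigma(t)=\lambda(t)$. Moreover, by Example~\ref{ex:der:QC}, the delta derivative coincides with the ordinary derivative, so $x^\Delta(t)=x'(t)$ and $(\lambda^\sigma(t))^\Delta=\lambda'(t)$. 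In particular, $\lambda^\sigma$ is delta-differentiable whenever $\lambda$ is $C^1$, so the assumption in the remark after Theorem~\ref{thm:mr:ham} is satisfied.

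Next I would apply Theorem~\ref{thm:mr:ham} with these substitutions. The Hamiltonian \eqref{hamil} becomes $H(t,x,u,\lambda,z)=f(t,x,u,z)+\lambda\,g(t,x,u,z)$ with no $\sigma$'s present. Equations \eqref{ham2} and \eqref{ham1} translate immediately into $x'(t)=H_\lambda$ and $\lambda'(t)=-H_x$; the stationary condition \eqref{ham3} becomes $H_u=0$; all are to hold pointwise on $[a,T]$, since $[a,T]^\kappa=[a,T]$ in the continuous case.

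The only point requiring a little care is the transversality condition. In \eqref{ham4} the first integral is $\int_{\rho(T)}^{T} H_{x^\sigma}\,\Delta t=\int_T^T H_x\,dt=0$, because $\rho(T)=T$ on $\mathbb{R}$. Similarly $\lambda^\sigma(\rho(T))=\lambda(T)$. Thus \eqref{ham4} reduces to
\begin{equation*}
\lambda(T)=\int_a^T H_z(t,x(t),u(t),\lambda(t),x(T))\,dt,
\end{equation*}
which is precisely \eqref{eq:tc}. This completes the specialization.

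The only conceptual subtlety, rather than a real obstacle, is verifying that the boundary contribution $\int_{\rho(T)}^{T}H_{x^\sigma}\,\Delta t$ from the time-scale transversality condition disappears on $\mathbb{R}$; this is immediate because $\mathbb{R}$ has no scattered points, so $\rho(T)=T$ and the integral is over an empty interval. Everything else is a verbatim translation of the time-scale notation into its classical continuous-time form.
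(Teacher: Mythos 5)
Your proposal is correct and follows exactly the route the paper takes: the paper simply states that the corollary is obtained ``immediately'' by setting $\mathbb{T}=\mathbb{R}$ in Theorem~\ref{thm:mr:ham}, and your write-up supplies precisely the routine verifications (namely $\sigma=\rho=\mathrm{id}$, $\mu\equiv 0$, the delta derivative and integral reducing to their classical counterparts, and the vanishing of $\int_{\rho(T)}^{T}H_{x^{\sigma}}\,\Delta t$ since $\rho(T)=T$) that justify that claim. Nothing is missing.
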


\begin{remark}
In the classical context $f$ and $g$ do not depend
on $x(T)$. In that case the transversality
condition \eqref{eq:tc} coincides with the standard one ($\lambda(T) = 0$) and Corollary~\ref{Cor:T=R:Hamilt}
coincides with the Hestenes theorem \cite{Hestenes}
(a weak form of the Pontryagin maximum principle \cite{MR0166037}). We were not able to find a single reference to the transversality condition \eqref{eq:tc} in the literature
of optimal control.
\end{remark}

We illustrate the use of Corollary~\ref{Cor:T=R:Hamilt} with an example:

\begin{example}\label{ham:nc:R}
Consider the problem
\begin{equation}\label{ham:nc:rp}
\begin{gathered}
\text{minimize} \quad \mathcal{L}[x,u]=\int_{-1}^{1}(u(t))^2dt ,\\
x'(t)=u(t)+x(1)t,
 \end{gathered}
\end{equation}
\begin{equation}
\label{ham:nc:rbc}
    x(-1)=1  \quad (x(1) \text{ free}).
\end{equation}
We begin by writing the Hamiltonian function
\begin{equation*}
H(t,x,u,\lambda,x(1))=u^2+\lambda(u+x(1)t).
\end{equation*}
Candidate solutions $(\tilde{x}(\cdot),\tilde{u}(\cdot))$ are those
satisfying the following conditions:
\begin{equation}\label{ham:nc:rh1}
\lambda'(t)=0,
\end{equation}
\begin{equation}\label{ham:nc:r2}
x'(t)=u(t)+x(1)t, \quad x(-1)=1,
\end{equation}
\begin{equation}\label{ham:nc:r3}
2u(t)+\lambda(t)=0,
\end{equation}
\begin{equation}\label{ham:nc:rh4}
\lambda(1)=\int_{-1}^{1}\lambda(t)tdt.
\end{equation}
The equation \eqref{ham:nc:rh1} has solution $\tilde{\lambda}(t)=c$,
$-1\leq t \leq 1$, which upon substitution into \eqref{ham:nc:rh4}
yields
\begin{equation*}
c=\int_{-1}^{1}ctdt=0 \, .
\end{equation*}
From the stationary condition \eqref{ham:nc:r3} we get
$\tilde{u}(t)=0$. Finally, substituting the optimal control
candidate back into \eqref{ham:nc:r2} yields
\begin{equation*}
\tilde{x}'(t)=\tilde{x}(1)t.
\end{equation*}
Integrating the latter equation with the initial condition
$\tilde{x}(-1)=1$, we obtain
\begin{equation}\label{ham:nc:r5}
\tilde{x}(t)=\frac{1}{2}\tilde{x}(1)t^{2}+1-\frac{1}{2}\tilde{x}(1).
\end{equation}
Substituting $t=1$ into \eqref{ham:nc:r5} we get $\tilde{x}(1)=1$.
Therefore, the candidate to solution for the problem
\eqref{ham:nc:rp}--\eqref{ham:nc:rbc} is
$\tilde{x}(t)=\frac{1}{2}t^{2}+\frac{1}{2}$.
\end{example}

In certain cases it is easy to show that the extremal candidates obtained from Theorem~\ref{thm:mr}, Theorem~\ref{thm:mr:ham},
or one of the corollaries are indeed minimizers.

\begin{theorem}\label{sc}
Let $(x^{\sigma},u^{\sigma},z)\rightarrow
f(t,x^{\sigma},u^{\sigma},z)$ be jointly convex in
$(x^{\sigma},u^{\sigma},z)$ and
$(x^{\sigma},u^{\sigma},z)\rightarrow g(t,x^{\sigma},u^{\sigma},z)$
be linear in $(x^{\sigma},u^{\sigma},z)$ for each $t$.
If $(\tilde{x}(\cdot),\tilde{u}(\cdot),\tilde{\lambda}(\cdot))$
is a solution of system \eqref{ham2}--\eqref{ham4}, then
$(\tilde{x}(\cdot),\tilde{u}(\cdot))$ is a global minimizer
of \eqref{ocp}--\eqref{ocbc:2}.
\end{theorem}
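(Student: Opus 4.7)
The plan is to bound $\mathcal{L}[x,u] - \mathcal{L}[\tilde{x},\tilde{u}]$ from below on an arbitrary admissible pair $(x,u)$ by combining the joint convexity of $f$ with the linearity of $g$, then absorbing the resulting linear terms via the Hamiltonian system \eqref{ham2}--\eqref{ham1}, the stationary condition \eqref{ham3}, and the transversality condition \eqref{ham4}. Writing $\Delta x = x - \tilde{x}$ and $\Delta u = u - \tilde{u}$, I would first apply joint convexity of $f$ at the point $(\tilde{x}^\sigma(t),\tilde{u}^\sigma(t),\tilde{x}(T))$ to obtain
\begin{equation*}
\mathcal{L}[x,u] - \mathcal{L}[\tilde{x},\tilde{u}]
\geq \int_a^T \left[\tilde{f}_{x^\sigma}\Delta x^\sigma
+ \tilde{f}_{u^\sigma}\Delta u^\sigma
+ \tilde{f}_z\,\Delta x(T)\right]\Delta t,
\end{equation*}
where tildes denote evaluation along $(\tilde{x},\tilde{u})$. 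Using $f = H - \lambda^\sigma g$ to replace each partial derivative of $f$, the right-hand side splits into an $\tilde{H}$-piece and a residue $-\int_a^T \tilde{\lambda}^\sigma\bigl[g_{x^\sigma}\Delta x^\sigma + g_{u^\sigma}\Delta u^\sigma + g_z\,\Delta x(T)\bigr]\Delta t$.

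The next step exploits linearity of $g$ in $(x^\sigma,u^\sigma,z)$: subtracting the dynamics for $\tilde{x}$ from that for $x$ yields the pointwise identity $\Delta x^\Delta = g_{x^\sigma}\Delta x^\sigma + g_{u^\sigma}\Delta u^\sigma + g_z\,\Delta x(T)$, so the residue collapses to $-\int_a^T \tilde{\lambda}^\sigma\,\Delta x^\Delta\,\Delta t$. The stationary condition \eqref{ham3} annihilates the $\Delta u^\sigma$ integral in the $\tilde{H}$-piece, and the costate equation \eqref{ham1} turns the $\Delta x^\sigma$ integral into $-\int_a^T (\tilde{\lambda}^\sigma)^\Delta\,\Delta x^\sigma\,\Delta t$. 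Applying the integration-by-parts formula from item (ii) of Section~\ref{sec:prm} with $f = \Delta x$ and $g = \tilde{\lambda}^\sigma$, this integral becomes $-[\tilde{\lambda}^\sigma \Delta x]_a^T + \int_a^T \tilde{\lambda}^\sigma\,\Delta x^\Delta\,\Delta t$, and the integral here exactly cancels the residue from the preceding step. Since $\Delta x(a) = 0$, only the boundary contribution $-\tilde{\lambda}^\sigma(T)\Delta x(T) + \Delta x(T)\int_a^T \tilde{H}_z\,\Delta t$ survives.

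The main obstacle is that \eqref{ham4} is expressed at $\rho(T)$, not at $T$, so some care is needed to close this estimate. The bridge is the identity
\begin{equation*}
\tilde{\lambda}^\sigma(T) - \tilde{\lambda}^\sigma(\rho(T))
= \int_{\rho(T)}^T (\tilde{\lambda}^\sigma)^\Delta\,\Delta t,
\end{equation*}
which reduces to $\mu(\rho(T))\,(\tilde{\lambda}^\sigma)^\Delta(\rho(T))$ when $T$ is left-scattered and vanishes when $T$ is left-dense. Applying the costate equation at the point $\rho(T)\in[a,T]^\kappa$ rewrites the right-hand side as $-\int_{\rho(T)}^T \tilde{H}_{x^\sigma}\,\Delta t$, whereupon \eqref{ham4} gives $\tilde{\lambda}^\sigma(T) = \int_a^T \tilde{H}_z\,\Delta t$. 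Substituting this into the surviving boundary expression yields zero, so $\mathcal{L}[x,u]\geq \mathcal{L}[\tilde{x},\tilde{u}]$ for every admissible $(x,u)$, which is the asserted global minimality.
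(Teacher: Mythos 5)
Your proposal is correct and follows essentially the same route as the paper's proof: the convexity inequality for $f$, the substitution $f=H-\lambda^{\sigma}g$ combined with \eqref{ham1} and \eqref{ham3}, integration by parts on the $(\tilde{\lambda}^{\sigma})^{\Delta}$ term, the identity $\tilde{\lambda}^{\sigma}(T)=\tilde{\lambda}^{\sigma}(\rho(T))-\int_{\rho(T)}^{T}H_{x^{\sigma}}\Delta t$ to reconcile \eqref{ham4} with the boundary term at $T$, and finally linearity of $g$ together with the dynamics to kill the remaining integral. The only difference is bookkeeping (you cancel the residue against the integrated-by-parts term early, while the paper collects everything into one integrand and invokes linearity at the end), plus a harmless mislabeling of which factor plays the role of $f$ and which of $g$ in the integration-by-parts formula.
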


\begin{proof}
Since $f$ is convex in $(x^{\sigma},u^{\sigma},z)$ for any
admissible pair $(x(\cdot),u(\cdot))$, we have
\begin{multline*}
\mathcal{L}[x,u]-\mathcal{L}[\tilde{x},\tilde{u}]=\int_{a}^{T}\left[f(t,x^{\sigma}(t),u^{\sigma}(t),x(T))
-f(t,\tilde{x}^{\sigma}(t),\tilde{u}^{\sigma}(t),\tilde{x}(T))\right]\Delta
t\\
\geq
\int_{a}^{T}\Bigl[f_{x^{\sigma}}(t,\tilde{x}^{\sigma}(t),
\tilde{u}^{\sigma}(t),\tilde{x}(T))(x^{\sigma}(t)-\tilde{x}^{\sigma}(t))\\
+f_{u^{\sigma}}(t,\tilde{x}^{\sigma}(t),\tilde{u}^{\sigma}(t),\tilde{x}(T))(u^{\sigma}(t)-\tilde{u}^{\sigma}(t))\\
+f_{z}(t,\tilde{x}^{\sigma}(t),\tilde{u}^{\sigma}(t),\tilde{x}(T))(x(T)-\tilde{x}(T))\Bigr]\Delta t.
\end{multline*}
Because the triple
$(\tilde{x}(\cdot),\tilde{u}(\cdot),\tilde{\lambda}(\cdot))$
satisfies equations \eqref{ham1},\eqref{ham3}, and \eqref{ham4}, we
obtain
\begin{multline*}
\mathcal{L}[x,u]-\mathcal{L}[\tilde{x},\tilde{u}] \geq \int_{a}^{T}\Bigl[-\tilde{\lambda}^{\sigma}(t)g_{x^{\sigma}}(\cdots)(x^{\sigma}(t)-\tilde{x}^{\sigma}(t))\\
-(\tilde{\lambda}^{\sigma}(t))^{\Delta}(x^{\sigma}(t)
-\tilde{x}^{\sigma}(t))-\tilde{\lambda}^{\sigma}(t)
g_{u^{\sigma}}(\cdots)(u^{\sigma}(t)-\tilde{u}^{\sigma}(t))\\
-\tilde{\lambda}^{\sigma}(t)g_{z}(\cdots)(x(T)-\tilde{x}(T))\Bigr]\Delta
t\\
+\left[\tilde{\lambda}^{\sigma}(\rho(T))-\int_{\rho(T)}^{T}H_{x^{\sigma}}(\cdots\cdot)\Delta
t\right](x(T)-\tilde{x}(T)),
\end{multline*}
where
$(\cdots)=(t,\tilde{x}^{\sigma}(t),\tilde{u}^{\sigma}(t),\tilde{x}(T))$
and $(\cdots\cdot)=
\left(t,\tilde{x}^{\sigma}(t),\tilde{u}^{\sigma}(t),\tilde{\lambda}^{\sigma}(t),\tilde{x}(T)\right)$.
Integrating by parts the term in
$(\tilde{\lambda}^{\sigma}(t))^{\Delta}$  we get
\begin{multline*}
\mathcal{L}[x,u]-\mathcal{L}[\tilde{x},\tilde{u}]\geq
\int_{a}^{T}\Bigl[-\tilde{\lambda}^{\sigma}(t)g_{x^{\sigma}}(\cdots)(x^{\sigma}(t)-\tilde{x}^{\sigma}(t))\\
+\tilde{\lambda}^{\sigma}(t)(x^{\Delta}(t)-\tilde{x}^{\Delta}(t))-\tilde{\lambda}^{\sigma}(t)
g_{u^{\sigma}}(\cdots)(u^{\sigma}(t)-\tilde{u}^{\sigma}(t))
-\tilde{\lambda}^{\sigma}(t)g_{z}(\cdots)(x(T)-\tilde{x}(T))\Bigr]\Delta t\\
+\left[\tilde{\lambda}^{\sigma}(\rho(T))-
\int_{\rho(T)}^{T}H_{x^{\sigma}}(\cdots\cdot)\Delta
t\right](x(T)-\tilde{x}(T))-\tilde{\lambda}^{\sigma}(T)(x(T)-\tilde{x}(T)).
\end{multline*}
But from \eqref{ham1} and properties of the delta integral,
we have $$\tilde{\lambda}^{\sigma}(T)=\tilde{\lambda}^{\sigma}(\rho(T))-
\int_{\rho(T)}^{T}H_{x^{\sigma}}(\cdots\cdot)\Delta t \, .$$
Rearranging the terms we obtain:
\begin{multline}
\label{ine:bc:sc}
\mathcal{L}[x,u]-\mathcal{L}[\tilde{x},\tilde{u}]\geq
\int_{a}^{T}\Bigl\{-\tilde{\lambda}^{\sigma}(t)\bigl[\tilde{x}^{\Delta}(t)-x^{\Delta}(t)
+g_{x^{\sigma}}(\cdots)(x^{\sigma}(t)-\tilde{x}^{\sigma}(t))\\
+g_{u^{\sigma}}(\cdots)(u^{\sigma}(t)-\tilde{u}^{\sigma}(t))
+g_{z}(\cdots)(x(T)-\tilde{x}(T))\bigr]\Bigr\}\Delta t.
\end{multline}
The intended conclusion follows
from \eqref{ham2} and linearity of $g$ in $(x^{\sigma},u^{\sigma},z)$:
the right hand side of inequality \eqref{ine:bc:sc}
is equal to zero, that is,
\begin{equation*}
\mathcal{L}[x,u]\geq\mathcal{L}[\tilde{x},\tilde{u}]
\end{equation*}
for each admissible pair $(x(\cdot),u(\cdot))$.
\end{proof}

\begin{example}
Consider the problem \eqref{ham:nc:rp}--\eqref{ham:nc:rbc} in Example
\ref{ham:nc:R}. The integrand is independent of $(x,z)$ and convex
in $u$. The right-hand side of the control system is linear in
$(u,z)$ and independent of $x$. Hence, Theorem~\ref{sc}
asserts that the extremal
\begin{eqnarray*}
\tilde{x}(t)=\frac{1}{2}\left(t^{2}+1\right)\\
\tilde{u}(t)=0,\quad \tilde{\lambda}(t)=0,
\end{eqnarray*}
found in Example~\ref{ham:nc:R} gives the
global minimum to the problem.
\end{example}

\begin{example}
Consider again the problem from Example \ref{fromB}. Replacing
$x^{\Delta}$ by $u^{\sigma}$ we can rewrite problem \eqref{AD}--\eqref{eq:bc} in the following form:
\begin{equation*}
 \text{minimize} \quad   \mathcal{L}[x]=\int_{0}^{1}\left(\sqrt{1+(u^{\sigma}(t))^2}+\beta(x(1)-1)^2 \right)\Delta
     t
\end{equation*}
subject to
\begin{equation*}
x^{\Delta}(t)=u^{\sigma}(t),\,  \quad x(0)=0  \quad (x(1)
\text{ free}).
\end{equation*}
The function $f$ is independent of $x$ and convex in $(u,z)$. The
right-hand side of the control system is linear in $u$ and
independent of $(x,z)$. Therefore, by Theorem~\ref{sc}
$\tilde{x}(t) = \alpha(\beta) t$
is the global minimizer for the problem.
\end{example}


\section*{Acknowledgements}

The first author was supported by Bia{\l}ystok Technical University, via a project of the Polish Ministry of Science
and Higher Education ``Wsparcie miedzynarodowej mobilnosci
naukowcow''; the second author by the R\&D unit CEOC, via FCT
and the EC fund FEDER/POCI 2010. ABM is grateful to the good
working conditions at the Department of Mathematics
of the University of Aveiro where this research was carried out.



\end{document}